\documentclass[12pt]{amsart}
\usepackage{latexsym}
\usepackage{amsfonts}
\usepackage{amsmath}
\usepackage{amsthm}
\usepackage{amssymb}
\usepackage{amscd}

\usepackage[english]{babel}
\usepackage{enumerate}
\setlength{\topmargin}{-7mm} \setlength{\oddsidemargin}{4.3mm}
\setlength{\evensidemargin}{0mm} \setlength{\textwidth}{6.0in}
\setlength{\textheight}{9.0in}
\newcommand{\hot}{\hat{\otimes}}

\newcommand{\RR}{\mathbb{R}}

\newcommand{\gothg}{\mathfrak{g}}
\newcommand{\g}{\gothg}
\newcommand{\fk}{\mathfrak{k}}

\newcommand{\cM}{\mathcal{M}}
\newcommand{\Fre}{{Fr\'{e}chet \,}}
\newcommand{\Hom}{\operatorname{Hom}}
\newcommand{\C}{\mathbb{C}}

\newcommand{\into}{\hookrightarrow}

\newtheorem{thm}{Theorem}[section]
\newtheorem{thm*}{Theorem}
\newtheorem{lem}[thm]{Lemma}
\newtheorem{lem*}[thm*]{Lemma}
\newtheorem{prop}[thm]{Proposition}
\newtheorem{cor}[thm]{Corollary}
\newtheorem{prop*}[thm*]{Proposition}
\newtheorem{cor*}[thm*]{Corollary}

\newtheorem{defn}[thm]{Definition}
\newtheorem{defn*}[thm*]{Definition}
\newtheorem{thm-defn}[thm]{Theorem-Definition}

\theoremstyle{remark}

\begin{document}

\author{Dmitry Gourevitch}
\address{Faculty of Mathematics and Computer Science, Weizmann
Institute of Science, POB 26, Rehovot 76100, Israel }
\email{dimagur@weizmann.ac.il}
\author{Alexander Kemarsky}
\address{Mathematics Department, Technion - Israel Institute of Technology, Haifa, 32000 Israel}
\email{alexkem@tx.technion.ac.il}

\title{Irreducible representations of product of real reductive groups}

\begin{abstract}
Let $G_1,G_2$ be reductive Lie groups and $(\pi,V)$ a smooth, irreducible, admissible representation
of $G_1 \times G_2$. We prove that $(\pi,V)$ is the completed tensor product of
$(\pi_i,V_i)$, $i=1,2$, where $(\pi_i,V_i)$ is a smooth,irreducible,admissible representation
of $G_i$, $i=1,2$. We deduce this from the analogous theorem for Harish-Chandra modules, for which one direction was proven in \cite[Appendix A]{AG} and the other direction we prove here.

As a corollary, we deduce that strong Gelfand property for a pair $H \subset G$ of real reductive groups is equivalent to the usual Gelfand property of the pair $\Delta H \subset G \times H$.

\end{abstract}

\date{\today}

\maketitle

\begin{section}{Introduction}
Let $G_1,G_2$ be reductive Lie groups , $\gothg_i $ be the Lie algebra of $G_i$. Fix $K_i$ - a maximal compact subgroup
of $G_i$ ($i=1,2$).
Let $\cM(\g_i,K_i)$ be the category of admissible Harish-Chandra $(\g_i,K_i)$-modules  and $\cM(G_i)$ be the category of smooth admissible \Fre representations of moderate growth.
We also denote by $Irr(G_i)$ and $Irr(\g_i,K_i)$ the isomorphism classes or irreducible objects in the above categories.

In this note we prove

\begin{thm}\label{thm:gK}
Let $M\in Irr(\g_1\times \g_2,K_1\times K_2)$. Then
there exist $M_i \in Irr(\g_i,K_i)$ such that $M=M_1\otimes M_2$.
\end{thm}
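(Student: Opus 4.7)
My strategy is to extract an irreducible admissible $(\g_1, K_1)$-submodule $M_1 \subseteq M$, then show $M$ is $M_1$-isotypic as a $(\g_1, K_1)$-module, and finally build an isomorphism $M \cong M_1 \otimes M_2$ with $M_2 \in Irr(\g_2, K_2)$. For the extraction, I will fix an irreducible $K_1\times K_2$-type $\tau_1\boxtimes\tau_2$ appearing in $M$ and set $V:=U(\g_1)\cdot M(\tau_1\boxtimes\tau_2)$, a cyclic $(\g_1,K_1)$-submodule of $M$. Because $\g_1$ commutes with $K_2$, $V$ is $\tau_2$-isotypic as a $K_2$-representation; hence for every $\sigma\in\widehat{K_1}$ the isotypic component $V(\sigma)$ is contained in $M(\sigma\boxtimes\tau_2)$, which is finite-dimensional by admissibility of $M$. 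Thus $V$ is both admissible and finitely generated as a $(\g_1,K_1)$-module, so by Harish-Chandra's finiteness theorem it has finite length; I then choose $M_1\subseteq V$ to be any simple $(\g_1,K_1)$-submodule.

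Next, let $T\subseteq M$ denote the $M_1$-isotypic part under the $(\g_1,K_1)$-structure, i.e.\ the sum of images of all $(\g_1,K_1)$-morphisms $M_1\to M$. For $X\in\g_2$ (resp.\ $k\in K_2$) and any such $\varphi$, the map $m\mapsto X\varphi(m)$ (resp.\ $m\mapsto k\varphi(m)$) is again a $(\g_1,K_1)$-morphism since $\g_2\times K_2$ commutes with $\g_1\times K_1$; hence $T$ is $(\g_1\times\g_2,K_1\times K_2)$-stable, and irreducibility of $M$ forces $T=M$. Define $M_2:=\Hom_{(\g_1,K_1)}(M_1,M)$ with its natural $(\g_2,K_2)$-action $(g\varphi)(m):=g\varphi(m)$ and consider the $(\g_1\times\g_2, K_1\times K_2)$-equivariant evaluation map $\mathrm{ev}\colon M_1\otimes M_2\to M$, $m\otimes\varphi\mapsto\varphi(m)$. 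Its image equals $T=M$; injectivity follows from Dixmier's lemma ($\operatorname{End}_{(\g_1,K_1)}(M_1)=\C$) together with the standard fact that every $(\g_1,K_1)$-submodule of $M_1\otimes M_2$ has the form $M_1\otimes W$ for some subspace $W\subseteq M_2$. Hence $\mathrm{ev}$ is an isomorphism.

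From the isomorphism $M\cong M_1\otimes M_2$, $K_2$-local finiteness of $M$ transfers to $M_2$, and admissibility of $M$ gives $\dim M_1(\sigma_1)\cdot\dim M_2(\sigma_2)=\dim M(\sigma_1\boxtimes\sigma_2)<\infty$; picking $\sigma_1$ with $M_1(\sigma_1)\ne 0$ shows $M_2$ is admissible. Any $(\g_2,K_2)$-submodule $W\subseteq M_2$ yields a submodule $M_1\otimes W\subseteq M_1\otimes M_2\cong M$, forcing $W=0$ or $W=M_2$, so $M_2\in Irr(\g_2,K_2)$. The main obstacle I foresee is the extraction of $M_1$: $M$ is a priori not admissible as a $(\g_1,K_1)$-module by itself, and the crucial trick is exploiting the commuting $K_2$-action to cut out a $\tau_2$-isotypic cyclic submodule with controllable $K_1$-multiplicities; once $M_1$ is secured, the Schur-density analysis packages everything cleanly.
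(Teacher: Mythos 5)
Your proposal is correct, and its first half coincides with the paper's: both extract $M_1$ as an irreducible constituent of the $(\g_1,K_1)$-module generated by a single $K_1\times K_2$-isotypic component, using the commuting $K_2$-action to keep that module $\tau_2$-isotypic and hence admissible. The second half genuinely diverges. The paper works with a \emph{finitely generated} $(\g_2,K_2)$-submodule $W_2$ of $\Hom_{(\g_1,K_1)}(M_1,M)$, proves its admissibility by restricting morphisms to a generating $K_1$-type of $M_1$, extracts an irreducible $V_2\subseteq W_2$, and then concludes that the nonzero evaluation map $M_1\otimes V_2\to M$ is an isomorphism by citing Proposition \ref{prop:irr} (the converse direction, from \cite{AG}) for irreducibility of the source. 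You instead take the \emph{full} Hom-space $M_2$, prove surjectivity of evaluation by showing the $M_1$-isotypic part of $M$ is a nonzero $(\g,K)$-submodule, prove injectivity via Dixmier's lemma and the fact that submodules of the isotypic module $M_1\otimes M_2$ are of the form $M_1\otimes W$, and only afterwards deduce admissibility and irreducibility of $M_2$. What your route buys is independence from Proposition \ref{prop:irr}: you establish $M\cong M_1\otimes M_2$ with both factors irreducible without ever needing to know in advance that a tensor product of irreducibles is irreducible. The price is that you must justify that $M_2$ is a legitimate $(\g_2,K_2)$-module --- in particular that $K_2$ acts locally finitely on $\Hom_{(\g_1,K_1)}(M_1,M)$ --- \emph{before} invoking the equivariant isomorphism; your phrase ``local finiteness transfers from the isomorphism'' has the logic slightly backwards, though the gap is easily filled (restrict morphisms to a generating finite-dimensional $K_1$-type of $M_1$, exactly as in the paper's admissibility argument for $W_2$, to see that the $K_2$-orbit of any $\varphi$ spans a finite-dimensional space). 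With that point reordered, your argument is complete and arguably more self-contained than the paper's.
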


The converse statement, saying that for  irreducible $M_i \in \cM(\g_i,K_i), \, M_1\otimes M_2$ is irreducible is \cite[Proposition A.0.6]{AG}. By the Casselman-Wallach equivalence of categories $\cM(\g,K) \simeq \cM(G)$, these two statements imply

\begin{thm}\label{thm:G}
A representation $(\pi,V)\in \cM(G_1\times G_2)$ is irreducible if and only if there exist irreducible $(\pi_i,V_i)\in \cM(G_i)$ such that
$(\pi,V) \simeq (\pi_1,V_1) \hot (\pi_2,V_2).$
\end{thm}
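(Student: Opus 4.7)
The plan is to deduce Theorem \ref{thm:G} from Theorem \ref{thm:gK} together with \cite[Proposition A.0.6]{AG} by transporting the statements through the Casselman-Wallach equivalence $\cM(\g,K)\simeq\cM(G)$. The only non-formal point is to check that this equivalence is compatible with the two relevant tensor products: the algebraic tensor product on the Harish-Chandra side and the completed tensor product $\hot$ on the smooth Fr\'echet side.

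Concretely, I would first establish the following compatibility lemma: for $(\pi_i,V_i)\in\cM(G_i)$ with Harish-Chandra modules $M_i := (V_i)_{K_i}$, the completed tensor product $(\pi_1,V_1)\hot(\pi_2,V_2)$ lies in $\cM(G_1\times G_2)$, and its space of $(K_1\times K_2)$-finite vectors is precisely $M_1\otimes M_2$. Smoothness and moderate growth of $V_1\hot V_2$ are inherited from the factors via a tensor product of defining seminorms. For admissibility and the identification of $K$-finite vectors, note that for irreducible $K_i$-types $\tau_i$ the continuous projectors onto the $\tau_i$-isotypic components $V_i(\tau_i)$ (integration against $\overline{\chi_{\tau_i}}$) combine to a continuous projector of $V_1\hot V_2$ onto its $(\tau_1\boxtimes\tau_2)$-isotypic component; since each $V_i(\tau_i)$ is finite-dimensional by admissibility of $V_i$, this isotypic component equals $V_1(\tau_1)\otimes V_2(\tau_2)$, and summing over all $(\tau_1,\tau_2)$ yields exactly $M_1\otimes M_2$.

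With this lemma in hand, both directions of Theorem \ref{thm:G} become formal. For the ``if'' direction, given irreducible $(\pi_i,V_i)\in\cM(G_i)$, \cite[Proposition A.0.6]{AG} says $M_1\otimes M_2$ is irreducible in $\cM(\g_1\times\g_2,K_1\times K_2)$; the lemma then identifies $V_1\hot V_2$ as its Casselman-Wallach globalization, hence irreducible in $\cM(G_1\times G_2)$. For the ``only if'' direction, start with irreducible $(\pi,V)\in\cM(G_1\times G_2)$, pass to its Harish-Chandra module $M\in Irr(\g_1\times\g_2,K_1\times K_2)$, apply Theorem \ref{thm:gK} to write $M=M_1\otimes M_2$ with $M_i\in Irr(\g_i,K_i)$, globalize each $M_i$ to $(\pi_i,V_i)\in\cM(G_i)$, and invoke the lemma together with uniqueness of Casselman-Wallach globalization to identify $(\pi,V)$ with $(\pi_1,V_1)\hot(\pi_2,V_2)$.

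The principal obstacle is the compatibility lemma --- more precisely, checking that $V_1\hot V_2$ is admissible of moderate growth and that its $(K_1\times K_2)$-finite vectors exhaust $M_1\otimes M_2$ rather than merely contain it. Once this functorial compatibility between $\otimes$ and $\hot$ is secured, Theorem \ref{thm:G} follows as a formal consequence of Theorem \ref{thm:gK} and \cite[Proposition A.0.6]{AG}.
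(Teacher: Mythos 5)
Your proposal is correct and follows essentially the same route as the paper: your ``compatibility lemma'' is exactly the paper's Corollary \ref{cor:HCTen} (which the paper deduces from Lemma \ref{lem:comp} and nuclearity of the underlying spaces), and both directions are then obtained formally from Proposition \ref{prop:irr}, Theorem \ref{thm:gK}, and the Casselman--Wallach equivalence, just as you describe.
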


Finally, we deduce a consequence of this theorem concerning Gelfand pairs. A pair $(G,H)$ of reductive groups is called a \emph{Gelfand pair} if $H\subset G$ is a closed subgroup and the space $(\pi^*)^H$ of $H$-invariant continuous functionals on any $\pi\in Irr(G)$ has dimension zero or one. It is called a \emph{strong Gelfand pair} or a \emph{multiplicity-free pair} if $\dim\Hom_H(\pi|_H,\tau)\leq 1$ for any $\pi \in Irr(G), \, \tau\in Irr(H)$.

\begin{cor}\label{cor:Gel}
Let $H\subset G$ be reductive groups and let $\Delta H \subset G \times H$ denote the diagonal. Then $(G,H)$ is a multiplicity-free pair if and only if $(G \times H,\Delta H)$ is a Gelfand pair.
\end{cor}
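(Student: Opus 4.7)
My plan is to use Theorem~\ref{thm:G} to parametrize $Irr(G \times H)$ and then translate the Gelfand condition for $(G \times H, \Delta H)$ into the strong Gelfand condition for $(G,H)$ by a functional-analytic identification of the space of invariants.

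First, I would observe that by Theorem~\ref{thm:G}, every $\Pi \in Irr(G \times H)$ has the form $\Pi \simeq \pi \hot \tau$ for some $\pi \in Irr(G)$ and $\tau \in Irr(H)$, and conversely every such $\pi \hot \tau$ is irreducible. Hence the Gelfand condition for $(G \times H, \Delta H)$ reads: for all $\pi \in Irr(G)$ and $\tau \in Irr(H)$,
$$
\dim \bigl((\pi \hot \tau)^*\bigr)^{\Delta H} \leq 1.
$$

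The main step, and what I expect to be the principal technical point, is to identify this invariant space with $\Hom_H(\pi|_H, \tilde\tau)$, where $\tilde\tau$ denotes the smooth contragredient of $\tau$. By nuclearity of Casselman--Wallach representations, continuous functionals on $\pi \hot \tau$ correspond to jointly continuous bilinear forms $B \colon \pi \times \tau \to \C$, and $\Delta H$-invariance of $B$ says exactly that the associated continuous linear map $f \colon \pi \to \tau^*$, defined by $f(v)(w) = B(v,w)$, is $H$-equivariant. I would then argue that $f$ automatically lands in $\tilde\tau \subset \tau^*$: for each $v \in \pi$ the orbit map $h \mapsto hv$ is smooth in $\pi$, so postcomposing with the continuous map $f$ produces a smooth map $h \mapsto h \cdot f(v)$ into $\tau^*$, which is exactly the condition that $f(v)$ be a smooth vector. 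Conversely, any $H$-equivariant continuous $f\colon \pi \to \tilde\tau$ produces a $\Delta H$-invariant continuous functional on $\pi \hot \tau$, so the identification is bijective.

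Finally, since the contragredient functor $\tau \mapsto \tilde\tau$ is a bijection on $Irr(H)$, the condition $\dim \Hom_H(\pi|_H, \tilde\tau) \leq 1$ for all $(\pi,\tau)$ is equivalent to the strong Gelfand condition $\dim \Hom_H(\pi|_H, \tau) \leq 1$ for all $\pi \in Irr(G)$, $\tau \in Irr(H)$. The hard part is thus the functional-analytic identification in the middle step; everything else follows formally from Theorem~\ref{thm:G} and standard facts about smooth admissible Fr\'echet representations of moderate growth.
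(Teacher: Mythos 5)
Your proposal is correct and follows essentially the same route as the paper: reduce via Theorem~\ref{thm:G} to representations of the form $\pi \hot \tau$, then identify $\Delta H$-invariant functionals on $\pi \hot \tau$ with $\Hom_H(\pi|_H,\widetilde{\tau})$ using nuclearity and the fact that an $H$-equivariant continuous map into the full dual automatically lands in the smooth vectors. The paper packages this identification as a standalone lemma phrased through the kernel theorem ($\Hom_{\C}(E,W)\cong E'\hot W$), but the content is identical to your bilinear-form argument.
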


An analog of Corollary \ref{cor:Gel} was proven in \cite{vD} for generalized Gelfand property of arbitrary Lie groups, with smooth representations replaced by smooth vectors in unitary representations.

An analog of Theorem \ref{thm:G} for p-adic groups was proven in \cite[\S\S 2.16]{BZ}. For
a more detailed exposition see \cite[\S\S 10.5]{GoldHun}.

\subsection{Acknowledgements}
We thank Avraham Aizenbud for useful remarks.

\end{section}
\begin{section}{Preliminaries}\label{sec:Prel}

\subsection{Harish-Chandra modules and smooth representations}\label{subsec:HC}

In this subsection we fix a real reductive group $G$ and a maximal compact subgroup $K \subset G$. Let $\g,\fk$ denote the complexified Lie algebras of $G,K$.

\begin{defn}
A $(\g,K)$-module is a  $\g$-module $\pi$ with a locally finite action of $K$ such the two induced actions of $\fk$ coincide and
$\pi(ad(k)(X))=\pi(k)\pi(X)\pi(k^{-1})$ for any $k\in K$ and $X \in \g$.

A finitely-generated $(\g,K)$-module is called admissible if any representation of $K$ appears in it with finite (or zero) multiplicity. In this case we also call it a Harish-Chandra module.
\end{defn}

\begin{lem}[\cite{Wal1}, \S\S 4.2] \label{lem:AdmFinLen}
Any admissible $(\g,K)$-module $\pi$ has  finite length.
\end{lem}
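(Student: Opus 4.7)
The approach is to reduce to modules with a single generalized infinitesimal character and then invoke a uniform $K$-multiplicity bound due to Harish-Chandra.

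Since $\pi$ is finitely generated, pick a finite-dimensional $K$-stable subspace $V\subset\pi$ with $U(\g)V=\pi$. Because $Z(\g)$ commutes with $K$, the space $Z(\g)\cdot V$ is contained in the sum of the $K$-isotypic components occurring in $V$; by admissibility this sum is finite-dimensional, so $W:=Z(\g)V$ is a finite-dimensional $Z(\g)$-stable subspace. Consequently $Z(\g)$ acts on $W$ through a finite-dimensional commutative (hence Artinian) algebra, and $W$ decomposes as $\bigoplus_\chi W_\chi$ into generalized eigenspaces. Applying $U(\g)$ gives $\pi=\bigoplus_\chi U(\g)W_\chi$, a finite direct sum of $(\g,K)$-submodules, each with a single generalized infinitesimal character. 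Since a finite direct sum of finite length modules has finite length, we may assume $\pi$ has a single generalized infinitesimal character $\chi$.

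For this case, I would invoke Harish-Chandra's uniform bound: for each $K$-type $\sigma$ there is a constant $C=C(\chi,\sigma)$, independent of the module, such that $[\pi':\sigma]\leq C$ for every admissible $(\g,K)$-module $\pi'$ with generalized infinitesimal character $\chi$. Equivalently, the Harish-Chandra subquotient theorem realizes $\pi$ as a subquotient of a finite direct sum of principal series representations at $\chi$. Principal series have finite length: their $K$-spectrum is controlled by Frobenius reciprocity from a finite-dimensional representation of a compact subgroup, which uniformly bounds $K$-multiplicities and allows one to verify finite length by a direct inspection of composition factors. Hence $\pi$ itself has finite length.

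The main obstacle is the last step, namely the uniform $K$-multiplicity bound at a fixed generalized infinitesimal character. This is the deep input from Harish-Chandra's work, reflecting the fact that $U(\g)$ is finitely generated as a module over $U(\fk)\cdot Z(\g)$; once this bound is available, the relevant category of admissible Harish-Chandra modules with fixed infinitesimal character is both Noetherian and Artinian, and finite length follows from finite generation.
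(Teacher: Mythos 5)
The paper does not actually prove this lemma; it cites Wallach, \emph{Real Reductive Groups I}, \S\S 4.2, so your proposal has to be judged on its own. Your first reduction is correct and is indeed how the standard proof begins: $Z(\g)$ preserves $K$-isotypic components, so $Z(\g)V$ is finite-dimensional, $Z(\g)$ acts on $\pi$ through finitely many generalized infinitesimal characters, and $\pi$ splits as a finite direct sum accordingly. (One should also record the Noetherian half: $U(\g)$ is left Noetherian and $\pi$ is finitely generated, so ascending chains stabilize; you only gesture at this in the last sentence.)

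The second half has genuine gaps. First, the ``uniform bound $[\pi':\sigma]\leq C(\chi,\sigma)$ for \emph{every} admissible module with generalized infinitesimal character $\chi$'' is false as stated: $\pi'^{\oplus N}$ has the same generalized infinitesimal character and $N$ times the multiplicities. Harish-Chandra's bound $[\pi':\sigma]\leq \dim\sigma$ holds only for \emph{irreducible} $\pi'$. Second, and more importantly, a bound on $K$-multiplicities --- even a uniform one --- does not imply finite length: an infinite direct sum of irreducibles with pairwise disjoint $K$-spectra has all multiplicities $\leq 1$. The missing idea is a \emph{finite set} $F$ of $K$-types, depending only on $\chi$, such that every irreducible admissible module with infinitesimal character $\chi$ contains some $\sigma\in F$; this follows from Harish-Chandra's theorem that there are only finitely many such irreducibles (or from the subquotient theorem plus lowest $K$-type considerations), and then any chain of submodules has length at most $\sum_{\sigma\in F}[\pi:\sigma]<\infty$. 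Your appeal to ``direct inspection of composition factors'' of principal series is circular, since it presupposes the finite length you are trying to prove. Finally, the algebraic claim that $U(\g)$ is finitely generated as a module over $U(\fk)\cdot Z(\g)$ is false: passing to associated gradeds, $S(\g)$ would have to be finite over a subring of Krull dimension $\dim\fk+\operatorname{rk}\g<\dim\g$. The correct statement in this circle of ideas is that $U(\g)^{K}$ is finitely generated over $Z(\g)\otimes Z(\fk)$ (Harish-Chandra, Lepowsky--McCollum), which is what actually powers the multiplicity and finiteness theorems.
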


\begin{thm}[Casselman-Wallach, see \cite{Wal2}, \S\S\S 11.6.8] \label{thm:CW}
The functor of taking $K$-finite vectors $HC:\cM(G) \to \cM(\g,K)$  is an equivalence of categories.
\end{thm}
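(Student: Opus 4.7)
The plan is to prove this equivalence by constructing an explicit quasi-inverse functor, the Casselman--Wallach globalization, and verifying both the unit and counit of adjunction are isomorphisms. The whole difficulty lies on the side going from $(\g,K)$-modules back to smooth representations; the functor $HC$ itself is essentially formal once one knows the right facts.

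First I would check that $HC$ is well-defined. For $\pi\in\cM(G)$, the subspace $\pi^{K\text{-fin}}$ of $K$-finite vectors is stable under both $\g$ (since the $G$-action is smooth) and $K$, and by Harish-Chandra's admissibility theorem each $K$-isotypic component is finite-dimensional. Together with the fact that $\pi$ is finitely generated as a smooth representation (part of the definition of $\cM(G)$), this shows $\pi^{K\text{-fin}}$ is a finitely generated admissible $(\g,K)$-module, hence of finite length by Lemma \ref{lem:AdmFinLen}. Functoriality is immediate because any continuous $G$-equivariant map sends $K$-finite vectors to $K$-finite vectors.

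Second I would construct a globalization functor $\mathrm{Glob}:\cM(\g,K)\to\cM(G)$. The strategy is to use Casselman's subrepresentation theorem to embed a Harish-Chandra module $V$ into the $K$-finite vectors of a smooth principal series $I$ induced from a minimal parabolic, and then to define $\mathrm{Glob}(V)$ as the closure of $V$ inside $I$, equipped with the induced Fr\'echet topology. A dual construction handles quotients. One must check that this is independent of the embedding up to canonical isomorphism, that $\mathrm{Glob}(V)$ has moderate growth (inherited from $I$), and that it is finitely generated as a smooth representation. The composition $HC\circ\mathrm{Glob}$ comes equipped with a natural inclusion $V\hookrightarrow HC(\mathrm{Glob}(V))$ whose equality reduces to showing that the closure of $V$ in $I$ contains no new $K$-finite vectors, which follows from admissibility of $I$.

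The main obstacle, and the heart of the proof, is full faithfulness: every $(\g,K)$-homomorphism $T:HC(\pi_1)\to HC(\pi_2)$ between $K$-finite parts of objects of $\cM(G)$ extends uniquely to a continuous $G$-equivariant map $\pi_1\to\pi_2$. Uniqueness is immediate from density of $K$-finite vectors in a smooth moderate growth representation. Existence is the \emph{automatic continuity theorem} of Casselman and Wallach: one must bound the operator $T$ on each $K$-type uniformly enough to produce a continuous extension. The standard approach realizes both $\pi_i$ as smooth globalizations obtained from embeddings into principal series, reformulates $T$ as an intertwining distribution on $G$, and then uses the fact that such a distribution, once annihilated by a cofinite ideal in the center of $U(\g)$ and equivariant under $K\times K$, is automatically tempered---its matrix coefficients satisfy the moderate growth bounds needed to define a continuous map between the moderate growth completions. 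This is where moderate growth enters essentially: it is precisely the category in which this extension is possible and unique.

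Finally, given full faithfulness, essential surjectivity, and the natural isomorphism $HC\circ\mathrm{Glob}\cong\mathrm{id}$, the remaining isomorphism $\mathrm{Glob}\circ HC\cong\mathrm{id}$ follows formally: for $\pi\in\cM(G)$ the inclusion $HC(\pi)\hookrightarrow\pi$ extends by full faithfulness to a continuous $G$-map $\mathrm{Glob}(HC(\pi))\to\pi$, which induces the identity on $K$-finite parts and is therefore an isomorphism because both sides are smooth moderate growth representations with the same dense $K$-finite subspace. The hard step is, and will remain, the automatic continuity input in the third paragraph.
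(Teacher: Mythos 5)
This theorem is not proven in the paper at all: it is imported as a black box from Wallach (\S\S\S 11.6.8 of \cite{Wal2}), with the remark that Casselman and Wallach construct an explicit inverse globalization functor $\Gamma$. So there is no ``paper proof'' to compare against; what you have written is an outline of the proof that lives in the cited literature (Casselman's embedding and globalization, plus the Casselman--Wallach automatic continuity theorem), and as a roadmap it is faithful to that literature: the reduction of everything to full faithfulness, the role of moderate growth, and the formal derivation of $\mathrm{Glob}\circ HC\cong\mathrm{id}$ from the other pieces are all correctly identified.

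However, as a proof it has a genuine gap, and you name it yourself: the automatic continuity theorem --- that every $(\g,K)$-map between $K$-finite parts extends continuously --- is described but not established. This is not a routine verification; it requires uniform estimates on the growth of matrix coefficients across $K$-types (Wallach's proof occupies a substantial part of Chapter 11 of \cite{Wal2}, and the Bernstein--Kroetz approach in \cite{BerKr} is a separate long paper). Your third paragraph gestures at the mechanism (temperedness of intertwining distributions annihilated by a cofinite ideal of the center of $U(\g)$) without supplying the estimates, so the argument is circular at exactly the point where the theorem has content. One smaller imprecision: in checking that $HC$ is well defined you invoke Harish-Chandra's admissibility theorem, but admissibility is already part of the definition of $\cM(G)$ here; what actually needs checking is only that the $K$-finite vectors form a $\g$-stable, finitely generated subspace. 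In short: correct skeleton, but the decisive analytic input is cited rather than proven, which is also exactly what the paper itself does.
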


In fact, Casselman and Wallach construct an inverse functor $\Gamma: \cM(\g,K) \to \cM(G)$, that is called Casselman-Wallach globalization functor (see \cite[Chapter 11]{Wal2} or \cite{CasGlob} or, for a different approach, \cite{BerKr}).

\begin{cor} \label{cor:CW}
$ $
\begin{enumerate}[(i)]
\item  \label{it:Ab} The category $\cM(G)$ is abelian.
\item \label{it:ClosIm} Any morphism in $\cM(G)$ has closed image.
\end{enumerate}
\end{cor}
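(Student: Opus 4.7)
The plan is to transport both statements from the algebraic category $\cM(\g,K)$ using the Casselman--Wallach equivalence of Theorem \ref{thm:CW}.

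For (i), I would first verify that $\cM(\g,K)$ is itself abelian. The only non-formal point is that admissibility is preserved under taking subs and quotients, which follows from Lemma \ref{lem:AdmFinLen}: any subquotient of a finite-length module is finite-length, and finite length combined with finite $K$-multiplicities forces admissibility and finite generation. Kernels and cokernels formed in the category of all $(\g,K)$-modules therefore restrict to $\cM(\g,K)$, making it abelian, and Theorem \ref{thm:CW} then transports the abelian structure to $\cM(G)$.

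For (ii), given $\phi:V\to W$ in $\cM(G)$, I would factor $HC(\phi)$ in $\cM(\g,K)$ as $HC(V)\onto M\into HC(W)$, where $M$ is its image submodule, and globalize by $\Gamma$ to obtain a factorization $V\to \Gamma(M)\to W$ in $\cM(G)$. The goal is then to show that $\Gamma$ sends the inclusion $M\into HC(W)$ to a closed topological embedding $\Gamma(M)\into W$, and the surjection $HC(V)\onto M$ to a topological quotient map $V\onto \Gamma(M)$. Granting these, the set-theoretic image $\phi(V)$ coincides with the closed subspace $\Gamma(M)\subset W$.

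The principal obstacle is precisely this last point, which goes beyond the categorical equivalence of Theorem \ref{thm:CW}: at that level, one only concludes that $\phi(V)$ is dense in its closure $\overline{\phi(V)}\in \cM(G)$. Upgrading density to equality requires the finer analytic content in the Casselman--Wallach construction of $\Gamma$, namely its exactness with respect to closed subspaces and topological quotients; I would invoke \cite[Ch.~11]{Wal2} or \cite{CasGlob} for this refinement, after which both parts of the corollary follow at once.
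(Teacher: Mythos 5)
Your part (i) is essentially the paper's (the paper simply says $\cM(\g,K)$ is ``clearly'' abelian and transports the structure through Theorem \ref{thm:CW}; your extra remark about admissibility of subquotients via Lemma \ref{lem:AdmFinLen} is fine). For part (ii) you take a genuinely different, and heavier, route. The paper does \emph{not} need any analytic refinement of the globalization functor: it argues purely formally from (i). Given $\phi:\pi\to\tau$, set $\pi'=\pi/\ker\phi$ and $\tau'=\overline{Im\,\phi}$; the induced map $\phi':\pi'\to\tau'$ is a monomorphism (it is injective) and an epimorphism (its image is dense and morphisms in $\cM(G)$ are continuous maps of Hausdorff spaces, so two morphisms out of $\tau'$ agreeing on $Im\,\phi'$ coincide). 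In an abelian category a morphism that is both mono and epi is an isomorphism, hence in particular surjective, so $Im\,\phi=Im\,\phi'=\tau'=\overline{Im\,\phi}$. Your plan instead reduces the claim to the statement that $\Gamma$ carries the surjection $HC(V)\onto M$ to a topological quotient $V\onto\Gamma(M)$ --- which is exactly the closed-image assertion in disguise, so you must be careful not to argue circularly; as you note, it can only be supplied by the finer analytic content of \cite{Wal2} or \cite{CasGlob} (exactness of the globalization in the topological sense). That input is indeed available in those references, so your route can be completed, but it imports a substantially stronger theorem than necessary; the point of the paper's argument is that closed image is a \emph{formal} consequence of the categorical equivalence together with the Hausdorff property, and you should look for that shortcut before reaching for the analytic machinery.
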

\begin{proof}
(\ref{it:Ab})  $\cM(\g,K)$ is clearly  abelian and by the theorem  is equivalent to $\cM(G)$.

(\ref{it:ClosIm}) Let $\phi:\pi \to \tau$ be a morphism in $\cM(G)$.
Let $\tau' = \overline{Im {\phi}}$, $\pi' = {\pi/\ker {\phi}}$ and $\phi':\pi' \to \tau'$ be the natural morphism. Clearly $\phi'$ is monomorphic and epimorphic in the category $\cM_{}(G)$. Thus by (\ref{it:Ab}) it is an isomorphism. On the other hand, $Im \phi' = Im \phi \subset \overline{Im \phi} = \tau'$. Thus $Im {\phi} = \overline{Im \phi}$.
\end{proof}

%

We will also use the embedding theorem of Casselman.

\begin{thm}
Any irreducible $(\g,K)$-module can be imbedded into a
$(\g,K)$-module of principal series.
\end{thm}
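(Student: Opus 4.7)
The plan is to produce, for any irreducible $M \in Irr(\g,K)$, a nonzero $(\g,K)$-morphism from $M$ into a principal series module; irreducibility of $M$ then automatically upgrades such a morphism to an embedding, since its kernel must be a proper submodule and hence zero. The standard tool for producing such a morphism is Casselman's Jacquet functor, which is adjoint to parabolic induction.

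First, I would fix a minimal parabolic subgroup $P = MAN \subset G$ with its Langlands decomposition, and write $\fn,\mathfrak{a}$ for the complexified Lie algebras of $N,A$. To $M$ one associates the Jacquet module
\[
J(M) := \varprojlim_k M/\fn^k M,
\]
the $\fn$-adic completion of $M$. This carries a natural action of $MA$ compatible with the residual $\fn$-action, decomposes as $J(M) = \bigoplus_\mu J(M)_\mu$ into generalized $\mathfrak{a}$-weight spaces, and admits a finite-dimensional quotient $\sigma$ carrying a natural $P$-module structure (extract a single weight space and kill a large power of the augmentation ideal of $\mathfrak{a}\oplus\fn$). Casselman's adjunction (Frobenius reciprocity for the Jacquet functor) then converts the surjection $J(M) \onto \sigma$ into a nonzero $(\g,K)$-morphism $\varphi \colon M \to \operatorname{Ind}_P^G(\sigma)$, which by irreducibility of $M$ is the desired embedding.

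The main obstacle is the non-vanishing $J(M) \neq 0$ whenever $M \neq 0$: there is no purely algebraic reason for the $\fn$-adic topology on $M$ to be Hausdorff, since $\fn$ does not act nilpotently. This requires genuine analytic input. Casselman's original route is to globalize $M$ to a smooth \Fre representation of moderate growth via Theorem~\ref{thm:CW} and analyze the asymptotic expansions of $K$-finite matrix coefficients as one moves to infinity along $A^+$; the leading exponents of those expansions turn out to be exactly the generalized $\mathfrak{a}$-weights occurring in $J(M)$, which forces $J(M) \neq 0$. An alternative route is Beilinson-Bernstein localization on the flag variety, in which $J(M)$ appears as a geometric fiber of the localized $\mathcal{D}$-module at a closed Bruhat cell. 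Once non-vanishing is established, the extraction of $\sigma$ and the application of the adjunction are formal, so this step is the substantive content of the proof.
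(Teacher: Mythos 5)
The paper does not actually prove this statement: it is Casselman's subrepresentation theorem, imported as a known result (``the embedding theorem of Casselman''), so there is no in-paper argument to compare yours against step by step. Judged on its own terms, your outline reproduces the standard architecture of Casselman's proof correctly: pass to the Jacquet module along the nilradical $\fn$ of a minimal parabolic $P$, extract a finite-dimensional $P$-quotient $\sigma$, and use the adjunction between the Jacquet functor and parabolic induction to convert the surjection onto $\sigma$ into a nonzero $(\g,K)$-map $M\to\operatorname{Ind}_P^G(\sigma)$, which irreducibility of $M$ promotes to an embedding. You have also put your finger on the one genuinely hard point, namely $\fn M\neq M$.

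That is also where the proposal stops being a proof: the nonvanishing of the Jacquet module is the entire analytic content of the theorem, and you describe the two known routes to it (asymptotic expansions of $K$-finite matrix coefficients of the Casselman--Wallach globalization, or Beilinson--Bernstein localization) without carrying either out. Everything else in the sketch is formal, so as written this is a correct strategy with its decisive step cited rather than established. Two smaller points to repair in a full write-up. First, the detour through the completion $J(M)=\varprojlim_k M/\fn^k M$ and its weight spaces is unnecessary and slightly delicate: $J(M)$ is a product, not a direct sum, of generalized $\mathfrak{a}$-weight spaces, and an arbitrary weight space is not a $\mathfrak{p}$-module quotient since $\fn$ shifts weights. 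The cleaner object is the coinvariant space $M/\fn M$ itself, which is already a $\mathfrak{p}$-module with trivial $\fn$-action, is finite-dimensional by admissibility together with the Casselman--Osborne lemma, and surjects onto an irreducible module for the Levi factor; inducing that irreducible quotient (with the appropriate modulus twist) lands you in an honest principal series rather than in an induced representation from a non-semisimple finite-dimensional $P$-module. Second, the completion $J(M)$ \emph{is} the right object for the asymptotic analysis that proves $M/\fn M\neq 0$, so the two layers of the argument should be kept separate. Neither point changes the overall structure, but the nonvanishing step must be supplied (or explicitly cited) for this to count as a proof.
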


Those two statements have the following corollary.

\begin{cor}
The underlying topological vector space of any admissible smooth \Fre representation is a nuclear \Fre space.
\end{cor}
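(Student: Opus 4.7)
My plan is to induct on the length of the underlying Harish-Chandra module $HC(V)$, which is finite by Lemma \ref{lem:AdmFinLen}. Via the Casselman--Wallach equivalence and Corollary \ref{cor:CW}(\ref{it:ClosIm}), each step of a composition series of $HC(V)$ in $\cM(\g,K)$ lifts to a closed subrepresentation of $V$ in $\cM(G)$, so the induction is well-posed.

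For the base case, assume $V$ is irreducible in $\cM(G)$. Casselman's embedding theorem supplies an injection $HC(V) \hookrightarrow P$ for some principal series $(\g,K)$-module $P$. Applying the Casselman--Wallach globalization produces a morphism $\varphi : V \to \Gamma(P)$ in $\cM(G)$; since $\ker \varphi$ is an object of $\cM(G)$ with trivial Harish-Chandra module, it vanishes, and by Corollary \ref{cor:CW}(\ref{it:ClosIm}) its image is closed. The open mapping theorem then identifies $V$ with a closed subspace of $\Gamma(P)$. Now $\Gamma(P)$ is the space of smooth sections of a smooth homogeneous bundle over the compact flag manifold $G/P_0$, which embeds as a closed subspace of $C^\infty$ of a compact manifold and hence is nuclear Fréchet. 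Therefore $V$, as a closed subspace of a nuclear Fréchet space, is itself nuclear Fréchet.

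For the inductive step, choose an irreducible submodule $M_1 \subseteq HC(V)$ and let $V_1 \subseteq V$ be the corresponding closed subrepresentation (again using Corollary \ref{cor:CW}(\ref{it:ClosIm})). Both $V_1$ and $V/V_1$ belong to $\cM(G)$ with strictly smaller length, so by induction both are nuclear Fréchet. Applying the three-space property of nuclearity for Fréchet spaces to the short exact sequence $0 \to V_1 \to V \to V/V_1 \to 0$ finishes the induction.

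The step I expect to be most delicate is the invocation of the three-space property for Fréchet nuclearity; it is standard (see e.g.\ Pietsch's book on nuclear locally convex spaces) but non-trivial. An alternative that would avoid it is to embed $V$ itself, rather than just each irreducible composition factor, into a finite direct sum of smooth principal series globalizations by lifting the socle filtration of $HC(V)$; this seems to require essentially more work, so the inductive three-space argument is my preferred route.
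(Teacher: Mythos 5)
Your argument is correct and follows exactly the route the paper intends: the paper gives no written proof, merely asserting that the corollary follows from the Casselman--Wallach equivalence and Casselman's embedding theorem, and your write-up is the natural fleshing-out of that assertion (irreducibles become closed subspaces of smooth principal series, which are nuclear \Fre spaces, and the general case follows by induction on length using permanence properties of nuclearity). The only points worth flagging are the ones you already flag yourself: the three-space property of nuclearity for \Fre spaces, and the identification of $\Gamma(P)$ with the smooth principal series via the uniqueness part of Casselman--Wallach; both are standard and your use of them is sound.
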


\begin{defn}
Let $G_1$ and $G_2$ be real reductive groups. Let $(\pi_i,V_i)\in \cM(G_i)$ be admissible smooth \Fre representations of $G_i$. We define $\pi_1 \otimes \pi_2$ to be the natural
representation of $G_1\times G_2$ on the space $V_1 \widehat{\otimes}
V_2$.
\end{defn}

\begin{prop}[\cite{AG}, Proposition A.0.6] \label{prop:irr}
Let $G_{1}$ and $G_{2}$ be real reductive groups. Let $\pi_i \in Irr(\g_i,K_i)$
irreducible admissible Harish-Chandra modules of $G_i$. Then $\pi_1 \otimes \pi_2 \in Irr(\g_1\times \g_2,K_1\times K_2)$.
\end{prop}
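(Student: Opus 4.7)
The plan is to deduce irreducibility of $\pi_1 \otimes \pi_2$ from a density argument built on Schur's lemma for Harish-Chandra modules. Admissibility will be automatic: the $(\tau_1, \tau_2)$-isotypic component of $\pi_1 \otimes \pi_2$ under $K_1 \times K_2$ is the tensor product of the corresponding isotypic components of $\pi_1$ and $\pi_2$, which is finite-dimensional by admissibility of each $\pi_i$. Finite generation will follow a posteriori from irreducibility (any nonzero cyclic submodule turns out to be the whole module). So the content of the proposition is that $\pi_1 \otimes \pi_2$ has no nontrivial $(\g_1 \oplus \g_2, K_1 \times K_2)$-submodule.

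The first step is to apply Schur's lemma in the form of Dixmier's lemma. Since $\pi_1$ is admissible and finitely generated, its underlying vector space is countable-dimensional. Let $A_1 \subset \operatorname{End}_\C(\pi_1)$ be the subalgebra generated by $\pi_1(U(\g_1))$ and $\pi_1(K_1)$, so that $(\g_1, K_1)$-submodules of $\pi_1$ coincide with $A_1$-submodules. Then $\pi_1$ is an irreducible $A_1$-module, and its commutant $\operatorname{End}_{A_1}(\pi_1) = \operatorname{End}_{(\g_1, K_1)}(\pi_1)$ is a countable-dimensional division algebra over the algebraically closed field $\C$, hence equals $\C$. The Jacobson density theorem then yields: for any linearly independent $m_1, \dots, m_k \in \pi_1$ and any $m_1', \dots, m_k' \in \pi_1$, there exists $a \in A_1$ with $a \cdot m_i = m_i'$ for all $i$.

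The heart of the proof is to use density to show that every $A_1$-submodule $N \subset \pi_1 \otimes \pi_2$ (with $A_1$ acting through the first tensor factor) has the form $\pi_1 \otimes W$ for some $\C$-subspace $W \subset \pi_2$. Given $0 \neq n \in N$, write $n = \sum_{i=1}^k m_i \otimes v_i$ with $k$ minimal; then $\{m_i\}$ and $\{v_i\}$ are each linearly independent. Density produces $a_j \in A_1$ with $a_j m_i = \delta_{ij} m_j$, so that $a_j \cdot n = m_j \otimes v_j \in N$; varying the target of $m_j$ in a second application of density gives $\pi_1 \otimes v_j \subset N$ for each $j$. Letting $W$ be the span of all $v_j$ that appear across such decompositions of elements of $N$, one obtains $N = \pi_1 \otimes W$.

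To conclude, assume $N$ is a full $(\g_1 \oplus \g_2, K_1 \times K_2)$-submodule. Then the stability of $N = \pi_1 \otimes W$ under $1 \otimes U(\g_2)$ and $\{1\} \times K_2$ forces $W$ to be $\g_2$- and $K_2$-stable, hence a (not necessarily admissible) $(\g_2, K_2)$-submodule of $\pi_2$. For any nonzero $w \in W$, local finiteness of the $K_2$-action implies $w$ is $K_2$-finite, so $U(\g_2) \cdot K_2 \cdot w \subset W$ is a finitely generated $(\g_2, K_2)$-submodule of $\pi_2$; admissibility is inherited, placing it in $\cM(\g_2, K_2)$, and irreducibility of $\pi_2$ in this category forces it to equal $\pi_2$. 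Therefore $W = \pi_2$ and $N = \pi_1 \otimes \pi_2$. The main obstacle is verifying Schur's lemma in the infinite-dimensional setting via Dixmier; once that is in place, the density argument and the return from $W$ to $\pi_2$ are routine.
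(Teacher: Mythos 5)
Your proof is correct. Note, however, that the paper does not prove this proposition at all: it is quoted verbatim from \cite[Proposition A.0.6]{AG}, so there is no in-paper argument to compare against. Your route --- Dixmier's form of Schur's lemma (a countable-dimensional division algebra over $\C$ is $\C$, applicable because admissibility makes $\pi_1$ countable-dimensional and an intertwiner is determined by its value on one vector), followed by the Jacobson density theorem for the algebra $A_1$ generated by $U(\g_1)$ and $K_1$, the structural conclusion that every $A_1$-submodule of $\pi_1\otimes\pi_2$ has the form $\pi_1\otimes W$, and finally the observation that a $(\g_2,K_2)$-stable $W$ must be all of $\pi_2$ --- is exactly the standard argument for this statement (it is also how the analogous $p$-adic statement is proved in \cite{BZ} and \cite{GoldHun}, and is in the spirit of the cited appendix of \cite{AG}). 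All the delicate points are handled: you correctly reduce from ``irreducible in $\cM(\g_2,K_2)$'' to ``no nonzero proper $(\g_2,K_2)$-submodules whatsoever'' by passing to the finitely generated admissible submodule $U(\g_2)\cdot\operatorname{span}(K_2 w)$, and your identification of the $\tau_1\otimes\tau_2$-isotypic component as $\pi_1^{\tau_1}\otimes\pi_2^{\tau_2}$ gives admissibility. The one step stated slightly tersely is why $\operatorname{End}_{A_1}(\pi_1)$ is countable-dimensional (it embeds into $\pi_1$ via evaluation at a fixed nonzero vector, since an intertwiner vanishing there has nonzero kernel, hence vanishes); this is worth a sentence but is not a gap.
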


We will use the classical statement on irreducible representations of compact groups.
\begin{lem}\label{lem:comp}
Let $K_1,K_2$ be compact groups. A representation $\tau$ of $K_1\times K_2$ is irreducible if and only if there exist irreducible representations $\tau_i$ of $K_i$ such that $\tau \simeq \tau_1\otimes \tau_2$. Note that $\tau_i$ are finite-dimensional, and $\otimes$ is the usual tensor product.
\end{lem}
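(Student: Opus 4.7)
The plan is to reduce the claim to finite-dimensional linear algebra, after which it becomes standard. First I would invoke the Peter--Weyl theorem for the compact group $K_1\times K_2$ to argue that any continuous irreducible representation $\tau$ of $K_1\times K_2$ is finite-dimensional. Applied to each $K_i$ separately, it also yields the note at the end of the lemma that irreducible representations $\tau_i$ of $K_i$ are necessarily finite-dimensional, and thus that the ordinary tensor product is all that is needed.

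For the ``if'' direction, assume $\tau_i$ are finite-dimensional irreducibles of $K_i$. I would invoke Burnside's theorem (equivalently, Schur's lemma combined with the Jacobson density theorem): the span of $\tau_i(K_i)$ in $\operatorname{End}(\tau_i)$ is all of $\operatorname{End}(\tau_i)$. Tensoring, the span of $(\tau_1\otimes\tau_2)(K_1\times K_2)$ inside $\operatorname{End}(\tau_1\otimes\tau_2)=\operatorname{End}(\tau_1)\otimes\operatorname{End}(\tau_2)$ exhausts the full endomorphism algebra, which forces $\tau_1\otimes\tau_2$ to have no proper invariant subspace.

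For the ``only if'' direction, I would decompose the finite-dimensional $\tau$ into its $K_1$-isotypic components,
$$\tau=\bigoplus_{\alpha}V_\alpha\otimes M_\alpha,$$
where the $V_\alpha$ are pairwise non-isomorphic irreducible representations of $K_1$ and $M_\alpha:=\Hom_{K_1}(V_\alpha,\tau)$ are the multiplicity spaces. Since $K_2$ commutes with $K_1$, its action preserves each summand and, under the canonical identification, is of the form $\mathrm{id}\otimes\sigma_\alpha$ for some representation $\sigma_\alpha$ of $K_2$ on $M_\alpha$. Irreducibility of $\tau$ as a $K_1\times K_2$-representation then forces exactly one $\alpha$ to appear and the corresponding $\sigma_\alpha$ to be irreducible, since any proper $K_2$-invariant subspace $N\subset M_\alpha$ would produce the proper $K_1\times K_2$-invariant subspace $V_\alpha\otimes N\subset\tau$.

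This is a classical result and I expect no serious obstacle; the only place compactness of the $K_i$ enters is to guarantee the isotypic decomposition of $\tau|_{K_1}$ with finite-dimensional irreducible summands, which is exactly what Peter--Weyl provides.
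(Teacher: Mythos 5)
Your proof is correct. The paper itself gives no proof of this lemma --- it is stated as a classical fact with no argument supplied --- so there is nothing to compare against line by line; your write-up is the standard textbook proof and fills the gap cleanly. Both halves are sound: Peter--Weyl gives finite-dimensionality; Burnside/Jacobson density gives the ``if'' direction (note this uses that the representations are over $\C$, an algebraically closed field, which is the setting of the paper --- an alternative for the same step is the character computation $\int_{K_1\times K_2}|\chi_{\tau_1\otimes\tau_2}|^2=\int_{K_1}|\chi_{\tau_1}|^2\cdot\int_{K_2}|\chi_{\tau_2}|^2=1$); and the isotypic decomposition of $\tau|_{K_1}$ with $K_2$ acting on the multiplicity spaces $\Hom_{K_1}(V_\alpha,\tau)$ gives the ``only if'' direction. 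It is worth observing that this last argument is precisely the finite-dimensional prototype of the strategy the paper uses to prove Theorem \ref{thm:gK} itself (generate a $(\g_1,K_1)$-submodule, then pass to $\Hom_{(\g_1,K_1)}(V_1,V)$ as a module for the second factor), so your proof is very much in the spirit of the paper even though the paper does not spell it out.
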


\begin{cor}\label{cor:HCTen}
Let $G_1$ and $G_2$ be real reductive groups and $(\pi_i,V_i)\in \cM(G_i)$.
Then we have a natural isomorphism $(\pi_1\otimes\pi_2)^{HC}\simeq   \pi_1^{HC} \otimes   \pi_2^{HC}$.
\end{cor}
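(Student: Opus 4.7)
The plan is to compute both sides by decomposing $V_1 \hot V_2$ into $(K_1 \times K_2)$-isotypic components, exploiting Lemma \ref{lem:comp} to factor the irreducible representations of the product compact group.

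First I would dispose of the easy inclusion $V_1^{HC} \otimes V_2^{HC} \subset (V_1 \hot V_2)^{HC}$: for $v_i \in V_i^{HC}$, the orbit $\pi_i(K_i) v_i$ spans a finite-dimensional subspace $W_i \subset V_i$, hence the $(K_1 \times K_2)$-orbit of $v_1 \otimes v_2$ lies in the finite-dimensional subspace $W_1 \otimes W_2 \subset V_1 \hot V_2$, and the inclusion follows by linearity.

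For the converse I would decompose each $V_i$ using the isotypic projectors $p_{\tau_i} := \dim(\tau_i) \int_{K_i} \overline{\chi_{\tau_i}(k)} \pi_i(k)\, dk$. Admissibility forces each $V_i(\tau_i) := p_{\tau_i}(V_i)$ to be finite-dimensional, and $V_i^{HC} = \bigoplus_{\tau_i \in \widehat{K_i}} V_i(\tau_i)$ as algebraic vector spaces. By Lemma \ref{lem:comp} every irreducible representation of $K_1 \times K_2$ has the form $\tau_1 \otimes \tau_2$, and the corresponding character factors as $\chi_{\tau_1}(k_1) \chi_{\tau_2}(k_2)$. The central observation is that on $V_1 \hot V_2$ one has the factorization
\[ p_{\tau_1 \otimes \tau_2} \; = \; p_{\tau_1} \hot p_{\tau_2}, \]
whose image is the finite-dimensional, hence automatically complete, subspace $V_1(\tau_1) \otimes V_2(\tau_2)$, sitting inside $V_1^{HC} \otimes V_2^{HC}$.

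To conclude, any $v \in (V_1 \hot V_2)^{HC}$ generates a finite-dimensional $(K_1 \times K_2)$-submodule, so is supported on finitely many isotypic types: $v = \sum_{(\tau_1,\tau_2) \in F} p_{\tau_1 \otimes \tau_2}(v)$ for some finite set $F$, and each summand lies in $V_1^{HC} \otimes V_2^{HC}$. The main obstacle is justifying the displayed factorization of projectors: this amounts to interchanging the Haar-measure integral with the completed tensor product, which I would handle via the Fubini property for vector-valued integrals in nuclear \Fre spaces, using that $V_1 \hot V_2$ is nuclear \Fre by the corollary preceding this statement and that the $(K_1 \times K_2)$-action is jointly continuous.
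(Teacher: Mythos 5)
Your proof is correct; the paper itself states Corollary \ref{cor:HCTen} without proof, as an immediate consequence of Lemma \ref{lem:comp}, and your argument via the isotypic projectors and the factorization $p_{\tau_1\otimes\tau_2}=p_{\tau_1}\hot p_{\tau_2}$ (checked on pure tensors and extended by density and continuity) is precisely the standard route the authors evidently intend.
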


\end{section}
\begin{section}{Proof of Theorem \ref{thm:gK}}\label{sec:PfThmgK}
Throughout the section $\rho_i$ always denote irreducible representations of $K_1$, $\sigma_j$ always denote   irreducible representations of $K_2$. For a representation $V$ of $K_1$ (or of $K_2$) we will denote by $V^{\rho}$ (resp. by $V^{\sigma}$) the corresponding isotypic component. \\
Let $K:=K_1\times K_2$ and $\g:=\g_1\times \g_2$.

Let $(\pi,V)$ be an irreducible admissible $(\gothg,K)$ - module. We show that there exist non-zero irreducible and admissible
$(\gothg_1,K_1)$-module  $V_1$ and $(\gothg_2,K_2)$-module $V_2$ and a non-zero morphism $V_1 \bigotimes V_2 \to V$. From the irreducibility of $V$ and $V_1 \bigotimes V_2$, we obtain
that $V  \simeq V_1 \bigotimes V_2 $.


Let's first find the module $V_1$. Choose $\tau \in Irr(K)$ such that the isotypic component $V^{\tau}$ is non-zero. By Lemma \ref{lem:comp} $\tau \simeq \rho \otimes \sigma$ for some $\rho\in Irr(K_1), \, \sigma \in Irr(K_2)$.  Let
$W$ be the $(\gothg_1,K_1)$-module generated by $V^{\tau}$.
Note that since the actions of $(\gothg_1,K_1)$ and $(\gothg_2,K_2)$ commute, $W$ is also a
$K_2$-module and $W = W^{\sigma}$.
We claim that $W$ is an admissible $(\gothg_1,K_1)$-module.
Indeed, let $\rho_1$ be an irreducible representation of $K_1$. Then
$W^{\rho_1}  \subseteq V^{\rho_1 \otimes \sigma}$ and as a corollary
$$\text{dim}(W^{\rho_1}) \leq \text{dim}\left( V^{\rho_1 \otimes \sigma} \right) < \infty , $$
since $V$ is an admissible $(\gothg,K)$-module.
%

Now by Lemma \ref{lem:AdmFinLen} $W$ has finite length and thus
there is an irreducible admissible $(\gothg_1,K_1)$-submodule $V_1 \subseteq W$. Thus, we finished the
first stage of the proof. \newline
Let $$
W_2' := \Hom_{(\g_1,K_1)}(V_1,V). $$
Clearly, $W_2' \neq 0$.
Since actions of $(\gothg_1,K_1)$ and $(\gothg_2,K_2)$ on $V$ commute,
$W_2'$ has a natural structure of $(\gothg_2,K_2)$-module.
Take any non-zero morphism $L \in W_2'$ and let $W_2 \subset W_2'$ be the $(\gothg_2,K_2)$-module generated by $L$.

Let us show that $W_2$ is admissible. Choose $\sigma_2 \in Irr(K_2)$.
Let $\rho_2\in Irr(K_1)$  such that
$V_1^{\rho_2} \neq 0 $. Then $V_1^{\rho_2}$ generates $V_1$ and thus for any $L',L'' \in W_2^{\sigma_{2}}$ if $L'$ agrees with $L''$ on $V_1^{\rho_2}$ then $L'=L''$. This gives a linear embedding from $ W_2^{\sigma_{2}}$ into the finite-dimensional space $\Hom_{\C}(V_1^{\rho_2},V^{\rho_2 \otimes \sigma_{2}})$.
Thus $W_2$ is an admissible $(\gothg_2,K_2)$-module.

%

Thus $W_2$ has finite length and therefore there is an irreducible admissible submodule $V_2 \subseteq W_2$.
Define a linear map $\phi:V_1 \bigotimes V_2 \to V $ by the formula $$\phi(v \otimes l) := l(v) $$
 on the pure tensors.
Clearly, this is a non-zero $(\gothg,K)$-map. From the irreducibility of $V$  and of
$V_1 \bigotimes V_2$ (see Proposition \ref{prop:irr}), the result
$$V \simeq V_1 \bigotimes V_2$$ follows.
\end{section}

\begin{section}{Proof of Theorem \ref{thm:G} and Corollary \ref{cor:Gel}}\label{sec:PfCorGel}

\begin{proof}[Proof of Theorem \ref{thm:G}]
First take $\pi_i\in Irr(G_i),$ for $i=1,2.$ Then $\pi_i^{HC}\in Irr(\g_i,K_i)$ and by Proposition \ref{prop:irr} $\pi_1^{HC}\otimes \pi_2^{HC}\in Irr(\g_1\times \g_2,K_1\times K_2)$. By Corollary \ref{cor:HCTen} $(\pi_1\otimes \pi_2)^{HC}\simeq \pi_1^{HC}\otimes \pi_2^{HC}\in Irr(\g_1\times \g_2,K_1\times K_2)$. This implies $\pi_1\otimes \pi_2 \in Irr(G_1\times G_2)$.

Now take $\pi \in Irr(G_1\times G_2)$. Then $\pi^{HC} \in Irr(\g_1\times \g_2,K_1\times K_2)$ and by Theorem \ref{thm:gK} there exist  $(M_i)\in Irr(\g_i,K_i)$ such that $\pi^{HC} \simeq M_1\otimes M_2$. By Theorem \ref{thm:CW} there exist $\pi_i \in Irr(G_i)$ such that $\pi_i^{HC} \simeq M_i$. Then $\pi^{HC}\simeq \pi_1^{HC}\otimes \pi_2^{HC}\simeq (\pi_1\otimes \pi_2)^{HC}$ and by Theorem \ref{thm:CW} this implies $\pi \simeq \pi_1 \otimes \pi_2$.
\end{proof}

Corollary \ref{cor:Gel} follows from Theorem \ref{thm:G} and the following lemma.

\begin{lem}
Let $H \subset G$ be real reductive groups. Let $(\pi,E)$ and
$(\tau,W)$ be admissible smooth \Fre representations of $G$ and
$H$ respectively. Then $Hom_{H}(\pi,\tau)$ is canonically
isomorphic to $Hom_{\Delta H}(\pi \otimes \widetilde{\tau},\C),$ where $\widetilde{\tau}$ denotes the contragredient representation.
\end{lem}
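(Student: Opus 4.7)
The plan is to use the standard tensor--Hom adjunction: continuous bilinear forms on $E\times\widetilde W$ correspond bijectively, via the universal property of $\hot$, to continuous linear maps $E\to(\widetilde W)^*$, and $\Delta H$-invariance of the bilinear form translates into $H$-equivariance of the associated linear map with respect to the natural dual action on $(\widetilde W)^*$. The task is therefore to identify the image of such an equivariant map with $\tau$ rather than the a priori larger space $(\widetilde W)^*$.

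In the forward direction, to each $\phi\in\Hom_H(\pi,\tau)$ I would associate the bilinear form $B_\phi(v,\lambda):=\lambda(\phi(v))$. A short computation shows $B_\phi$ is $\Delta H$-invariant --- the two $h$-actions cancel by the definition of the contragredient --- and continuity of $\phi$ together with joint continuity of the evaluation pairing yields that $B_\phi$ extends uniquely to a continuous functional on $E\hot\widetilde W$, giving an element of $\Hom_{\Delta H}(\pi\otimes\widetilde\tau,\C)$.

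For the inverse, given a continuous $\Delta H$-invariant $B:E\hot\widetilde W\to\C$, set $\Psi(B)(v):=B(v,\cdot)\in(\widetilde W)^*$. Continuity in $v$ is immediate from continuity of $B$, and the $\Delta H$-invariance of $B$ translates into $\Psi(B)$ being $H$-equivariant when $(\widetilde W)^*$ carries the natural dual $H$-action $(h\cdot\mu)(\lambda)=\mu(\widetilde\tau(h^{-1})\lambda)$.

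The main --- and essentially only substantive --- obstacle is to upgrade $\Psi(B)$ from a map $E\to(\widetilde W)^*$ into a map $E\to W$. Here I would invoke two facts: first, since $\pi$ is a smooth Fréchet representation of $G$ of moderate growth, its restriction to $H$ is smooth, so the continuous $H$-equivariant map $\Psi(B)$ lands automatically in the space of $H$-smooth vectors of $(\widetilde W)^*$; second, by the Casselman--Wallach equivalence (Theorem \ref{thm:CW}) the smooth contragredient functor on $\cM(H)$ is involutive, so those smooth vectors are canonically identified with $W$ itself, i.e.\ $\widetilde{\widetilde\tau}\simeq\tau$. This reflexivity of the smooth contragredient is the nontrivial ingredient; once it is granted, the two assignments $\phi\mapsto B_\phi$ and $B\mapsto\Psi(B)$ are visibly mutually inverse, establishing the canonical isomorphism.
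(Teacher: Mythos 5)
Your proposal is correct and follows essentially the same route as the paper: both identify $\Hom_{\Delta H}(\pi\otimes\widetilde\tau,\C)$ with continuous $H$-equivariant maps $E\to\widetilde{W}'$ via the (nuclear Fréchet) tensor--Hom adjunction, and both close the gap by observing that any such equivariant map lands in the $H$-smooth vectors of $\widetilde{\tau}'$, which are canonically $\widetilde{\widetilde\tau}\simeq\tau$. The only cosmetic difference is that the paper packages this as a chain of embeddings $\Hom_H(\pi,\tau)\into\Hom_{\Delta H}(\pi\otimes\widetilde\tau,\C)\into\Hom_H(\pi,\widetilde\tau')$ that collapses, whereas you write out the two mutually inverse assignments explicitly.
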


\begin{proof}
For a nuclear \Fre space $V$ we denote by $V'$ its dual space
equipped with the strong topology. Let $\widetilde{W}\subset W'$ denote the
underlying space of $\widetilde{\tau}$. By the theory of nuclear
\Fre spaces (\cite[Chapter 50]{Treves}, we know $Hom_{\C}(E,W) \cong E' \widehat{\otimes} W$
and $Hom_{\C}(E \widehat{\otimes} \widetilde{W}, \C) \cong E'
\widehat{\otimes} \widetilde{W}'$. Thus we have canonical embeddings
$$ Hom_{H}(\pi,\tau) \into Hom_{\Delta H}(\pi \otimes \widetilde{\tau},\C)
\into Hom_{H}(\pi , \widetilde{\tau}')$$
Since the image of any $H$-equivariant map from $\pi$ to $\widetilde{\tau}'$ lies in the space of smooth vectors $\widetilde{\widetilde{\tau}}$, which is canonically isomorphic to $\tau$, the lemma follows.
\end{proof}

%

\end{section}

\end{document}